\title[Restricted invertibility near the worst hyperplane]{On the restricted invertibility 
	problem with an additional orthogonality constraint for random matrices}
\author{St\'ephane Chr\'etien} \thanks{National Physical Laboratory, 
	Hampton Road, TW11 0LW, UK. Email: stephane.chretien@npl.co.uk}
\begin{document}
	\maketitle
	

	%
	%
	%

	\begin{abstract}
		The Restricted Invertibility problem is the problem of selecting the largest subset of 
		columns of a given matrix $X$, while keeping the smallest singular value of the extracted submatrix above a certain threshold. 
		In this paper, we address this problem in the simpler case where $X$ is a random matrix but
		with the additional constraint that the selected columns be almost orthogonal to a given vector $v$. Our main result is a lower bound on the number of columns we can extract 
		from a normalized i.i.d. Gaussian matrix for the worst $v$.  
	\end{abstract}
	
	{\bf Keywords:} Restricted Invertibility, Column selection, Random matrices.
	
	\bigskip
	
	\section{Introduction}
	Let $X\in \mathbb R^{n\times p}$. 
	The goal of this short note is to study the following quantity, denoted by 
	$\gamma_{s,\rho_-}(X)$, defined for any $s\le n$ and $\rho_- \in (0,1)$ as 
	\bea 
	\gamma_{s,\rho_-}(X) & = & \sup_{v \in B(0,1)} \inf_{I \subset \mathcal S_{s,\rho_-}} \|X_I^t v\|_{\infty},
	\eea
	where $\mathcal S_{s,\rho_-}(X)$ is the family of all $S$ of $\{1,\ldots,p\}$ with 
	cardinal $|S|=s$, such that $\sigma_{\min}(X_S)\ge \rho_-$. The meaning of the index $\gamma_{s,\rho_-}$ is the following: for any $v\in \R^n$, we look for the "almost orthogonal" family inside the set of columns of $X$ with 
	cardinal $s$, which is the most orthogonal to $v$. 
	
	\subsection{The constrained restricted invertibility problem} 
	
	Once we have an idea of the behavior of $\gamma_{s,\rho_-}(X)$ as a function of 
	$s$, we can derive a lower bound on the number of columns sufficiently 
	orthogonal to a given vector which can be extracted from 
	a given matrix and which form a well conditioned submatrix. This problem 
	is a constrained counterpart to the well known Restricted Invertibility problem 
	which has a long history starting with the seminal work of Bourgain and Tzafriri 
	\cite{BourgainTzafriri:IJM87}. In particular, Bourgain and Tzafriri \cite{BourgainTzafriri:IJM87} obtained the 
	following result for square matrices: 
	\begin{theo}[\cite{BourgainTzafriri:IJM87}]
		\label{BT87}
		Given a $p\times p$ matrix $X$ whose columns have unit $\ell_2$-norm, there exists $I\subset \{1,\ldots,p\}$ 
		with 
		$\d |I|\ge d\frac{p}{\|X\|^2}$ 
		such that  
		$C \le \lb_{\min}(X_I^tX_I) $, 
		where $d$ and $C$ are absolute constants.
	\end{theo}
	See also \cite{Tropp:StudiaMath08} for a simpler proof. 
	Vershynin \cite{Vershynin:IJM01} generalized Bourgain and Tzafriri's result to the case of rectangular matrices 
	and the estimate of $|T|$ was improved. Recently, Spielman and Srivastava proposed in \cite{SpielmanSrivastava:IJM12} 
	a deterministic construction of $T$. Using the same techniques, Youssef \cite{Youssef:IMRN14} was able to improve Vershynin's result.
	
	Applications of such results are well 
	known in the domain of harmonic analysis \cite{BourgainTzafriri:IJM87}. The study of 
	the condition number is also a subject of extensive study in statistics and 
	signal processing \cite{Tropp:CRAS08}. 
	
	In the present paper, we focus on the case where the matrix $X$ is random (which 
	makes the problem a lot easier a priori) but we address 
	restricted invertibility with the additional almost orthogonality constraint
	that $\gamma_{s,\rho_-}(X)$ should be small. Such types of results will find applications in 
	applied harmonic analysis as well. Some applications  
	to sparse recovery and the LASSO procedure in computational statistics are discussed in 
	\cite{ChretienCoherentLasso:Arxiv15}. 
	
	\subsection{Main results}
	\label{MainRes}
	\begin{defi}
		\label{defgam}
		The index $\gamma_{s,\rho_-}(X)$ associated with the matrix $X$ in $\mathbb R^{n\times p}$ is defined by 
		\bea 
		\gamma_{s,\rho_-}(X) & = & \sup_{v \in B(0,1)} \inf_{I \subset \mathcal S_{s,\rho_-}} 
		\|X_I^t v\|_{\infty}.
		\eea
	\end{defi}
	An important remark is that the function $X\mapsto \gamma_{s,\rho_-}(X)$ is nonincreasing in the sense that if 
	we set $X^{\prime\prime}=[X,X^\prime]$, 
	where $X^\prime$ is a matrix in $\mathbb R^{n \times p^\prime}$, then $\gamma_{s,\rho_-}(X)\ge \gamma_{s,\rho_-}(X^\prime)$.
	
	The quantity $\gamma_{s,\rho_-}(X)$ is very small for 
	$p$ sufficiently large, at least for random matrices such as normalized standard Gaussian matrices as shown in the following theorem. 
	\begin{theo}
		\label{gamtheo}
		Assume that $X$ is random matrix in $\mathbb R^{n\times p}$ with i.i.d. columns with 
		uniform distribution on the unit sphere of $\R^n$. 
		Let $\rho_-$ and $\epsilon\in(0,1)$, $C_\kappa\in (0,+\infty)$ and assume that $p \ge \lceil e^{\frac6{\sqrt{2\pi}}}\rceil$. 
		Set 
		\bean 
		K_\epsilon & = & \frac{\sqrt{2\pi}}{6}\left(\left(1+C_\kappa \right) 
		\log\left(1+\frac2{\epsilon}\right)+C_\kappa+\log\left(\frac{C_\kappa}{4}\right)\right).
		\eean 
		Assume that $n$, $\kappa$ and $s$ satisfy 
		\bea
		& n \ge 6, \\
		\nonumber \\
		\label{kappa}
		& \kappa = \max\left\{ 4 e^{-2(\ln(2)-1)},\frac{4 e^3}{(1-\rho_-)^2} \ \left( \frac{(1+K_\epsilon)(1+C_{\kappa})}{c(1-\epsilon)^4}\right)^2 \log^2(p) \log(C_\kappa n)\right\}, \\
		\nonumber \\
		\label{n}
		& \displaystyle{\frac{\max\left\{\kappa s, 2\times 36 \times 3 \times 3,\exp((1-\rho_-)/2) \right)}{C_{\kappa}}} 
		\le n \le \min \left\{\left(\frac{p}{\log(p)}\right)^2,\frac{\exp\left(\frac{1-\rho_-}{\sqrt{2}}p \right)}{C_\kappa}\right\}.
		\eea 
		Then, we have 
		\bea 
		\gamma_{s,\rho_-}(X) & \le & 80\ \frac{\log(p)}{p}
		\eea
		with probability at least $1-5 \ \frac{n}{p\ \log(p)^{n-1}}-9\ p^{-n}$. 
	\end{theo}
	\begin{cor}
		We can take $s$ as large as 
		\bean 
		\label{s}
		\left\lfloor C_{s} \ \frac{ n}{ \log^2(p) \log(C_\kappa n)} \right\rfloor
		\eean 
		with 
		\bean 
		C_{s} & = & \frac{c^2(1-\rho_-)^2(1-\epsilon)^8}{4e^3} \ \frac{C_{\kappa}}{(1+K_{\epsilon})^2(1+C_{\kappa})^2}.
		\eean 
	\end{cor}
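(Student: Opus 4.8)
The plan is to read the corollary directly off Theorem~\ref{gamtheo} by identifying which of its hypotheses genuinely constrain $s$. Among the standing assumptions \eqref{kappa} and \eqref{n}, the quantity $s$ enters only through the lower bound in \eqref{n}, and there only via the inequality $\kappa s\le C_\kappa n$: the other two entries of the maximum ($2\times36\times3\times3$ and $\exp((1-\rho_-)/2)$) depend on $n$ alone and merely reinforce the lower bound on $n$ already present in the hypotheses. Hence the largest admissible choice is $s=\lfloor C_\kappa n/\kappa\rfloor$, and the whole task reduces to rewriting $C_\kappa n/\kappa$ in closed form.

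To do this I substitute the value of $\kappa$ from \eqref{kappa}. In the parameter regime of the theorem the second term of the maximum defining $\kappa$ dominates the first one, $4e^{-2(\ln2-1)}=e^{2}$; this needs only a one-line comparison using $n\ge 6$, $p\ge\lceil e^{6/\sqrt{2\pi}}\rceil$ and $\log^{2}(p)\,\log(C_\kappa n)\ge 1$, and in the degenerate case where the first term were the larger, the bound on $s$ would only improve, so the stated estimate still holds. Taking then
\[
\kappa=\frac{4e^{3}}{(1-\rho_-)^{2}}\left(\frac{(1+K_\epsilon)(1+C_\kappa)}{c(1-\epsilon)^{4}}\right)^{2}\log^{2}(p)\,\log(C_\kappa n),
\]
a direct computation gives
\[
\frac{C_\kappa n}{\kappa}=\frac{c^{2}(1-\rho_-)^{2}(1-\epsilon)^{8}}{4e^{3}}\,\frac{C_\kappa}{(1+K_\epsilon)^{2}(1+C_\kappa)^{2}}\,\frac{n}{\log^{2}(p)\,\log(C_\kappa n)}=C_{s}\,\frac{n}{\log^{2}(p)\,\log(C_\kappa n)},
\]
which is exactly the announced expression once integer parts are taken, since $\lfloor C_\kappa n/\kappa\rfloor=\lfloor C_{s}\,n/(\log^{2}(p)\log(C_\kappa n))\rfloor$.

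It then remains to observe that with this choice of $s$ every hypothesis of Theorem~\ref{gamtheo} is met: \eqref{kappa} and the upper bound on $n$ in \eqref{n} do not involve $s$, while $\kappa s\le C_\kappa n$ holds by construction, and $1\le s\le n$ whenever $n$ is large compared with $\log^{2}(p)\log(C_\kappa n)/C_{s}$ (which is part of the size constraints on $n$). Theorem~\ref{gamtheo} then yields $\gamma_{s,\rho_-}(X)\le 80\,\log(p)/p$ with the stated probability. There is no real obstacle in this argument; it is a bookkeeping exercise, and the only point calling for a little care is the comparison of the two terms in \eqref{kappa} that justifies dropping the first branch of the maximum, together with checking that the resulting integer $s$ is positive so that the statement is non-vacuous.
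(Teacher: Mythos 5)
Your proof is correct and is essentially the paper's own argument: the paper's proof is the one-line observation that (\ref{kappa}) and (\ref{n}) together force $s\le C_s\,n/(\log^2(p)\log(C_\kappa n))$, and you simply carry out the substitution $s\le C_\kappa n/\kappa$ with the explicit value of $\kappa$. One small quibble: your parenthetical about the degenerate case is backwards (if the first branch of the max in (\ref{kappa}) were the larger, $\kappa$ would be \emph{bigger} and the admissible $s$ \emph{smaller}), but since you correctly check that the second branch dominates in the theorem's regime, this does not affect the conclusion.
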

	\begin{proof}
		Notice that the constraints (\ref{kappa}) and (\ref{n}) together imply the following constraint on $s$:
		\bean
		& s \le & C_{s} \ \frac{ n}{ \log^2(p) \log(C_\kappa n)}
		\eean 
		The result follows immediately.
	\end{proof}
	
	\section{Proof of Proposition \ref{gamtheo}} 
	\label{Pf}

	\subsection{Constructing an outer approximation for $I$ in the definition of $\gamma_{s,\rho_-}$}
	
	Take $v\in \mathbb R^n$. We construct an outer approximation $\tilde{I}$ 
	of $I$ into which we be able to extract the set $I$. We procede recursively as follows: 
	until $|\tilde{I}|=\min \{\kappa s,p/2\}$, for some positive real number $\kappa$ to be specified later, do 
	\begin{itemize}
		\item Choose $j_1={\rm argmin}_{j=1,\ldots,p} |\la X_{j},v\ra|$ and set $\tilde{I}=\{j_1\}$
		\item Choose $j_2={\rm argmin}_{j=1,\ldots,p, \ j\not\in \tilde{I}} |\la X_{j},v\ra|$ and set $\tilde{I}=\tilde{I}\cup\{j_2\}$
		\item $\cdots$
		\item Choose $j_{k}={\rm argmin}_{j=1,\ldots,p, \ j\not\in \tilde{I}} |\la X_{j},v\ra|$ and set $\tilde{I}=\tilde{I}\cup\{j_k\}$. 
	\end{itemize}

	\subsection{An upper bound on $\|X^t_{\tilde{I}}v\|_\infty$}
	If we denote by $Z_j$ the quantity $|\la X_{j},v\ra|$ and by $Z_{(r)}$ the $r^{th}$ 
	order statistic, we get that 
	\bean 
	\|X^t_{\tilde{I}}v\|_\infty & = & Z_{(\kappa s)}.
	\eean
	Since the $X_j$'s are assumed to be i.i.d. with uniform distribution on the unit sphere of $\R^n$, 
	we obtain that the distribution of $Z_{(r)}$ is the distribution of the $r^{th}$ order statistics
	of the sequence $|X_j^tv|$, $j=1,\ldots,p$. 
	By (5) p.147 \cite{Muirhead:AspectsMultAnal05}, $|X_j^tv|$ has density $g$ and CDF $G$ 
	given by 
	\bean
	g(z) & = \frac1{\sqrt{\pi}} \frac{\Gamma\left(\frac{n}2\right)}{\Gamma\left(\frac{n-1}2\right)}
	\left(1-z^2 \right)^{\frac{n-3}2}
	\textrm{ and }
	G(z) = & 2\ \int_{0}^z g(\zeta) \ d\zeta.
	\eean
	Thus,
	\bean
	F_{Z_{(r)}}(z) & = & \bP \left(B\ge r \right)
	\eean
	where $B$ is a binomial variable $\cal B\left(p,G(z)\right)$.
	Our next goal is to find the smallest value $z_0$ of $z$ which satisfies 
	\bea
	\label{devorder}
	F_{Z_{(\kappa s)}}(z_0) & \ge & 1-p^{-n}.
	\eea
	We have the following standard concentration bound for $B$ (e.g. \cite{Dubhashi:CUP09}): 
	\bean 
	\mathbb P \left(B \le (1-\epsilon) \mathbb E[B] \right) & \le &  \exp\left(-\frac12 \ \epsilon^2  \mathbb E[B]\right)
	\eean 
	which gives 
	\bean 
	\mathbb P \left(B \ge (1-\epsilon) pG(z) \right) & \ge &  1-\exp\left(-\frac12 \ \epsilon^2 p G(z) \right)
	\eean 
	We thus have to look for a root (or at least an upper bound to a root) of the equation 
	\bean 
	G(z) & = & \frac1{\frac12 \ \epsilon^2 } \ \frac{n}{p} \ \log(p). 
	\eean 
	Notice that
	\bean
	G(z) & = & 2\ \frac1{\sqrt{\pi}} \frac{\Gamma\left(\frac{n}2\right)}{\Gamma\left(\frac{n-1}2\right)}
	\ \int_{0}^z  \ \left(1-\zeta^2 \right)^{\frac{n-3}2} \ d\zeta, \\
	& \ge & \frac1{\sqrt{\pi}} \frac{\Gamma\left(\frac{n}2\right)}{\Gamma\left(\frac{n-1}2\right)} \ z
	\eean
	for $z\le 1/\sqrt{2}$. By a straightforward application of Stirling's formula (see e.g. (1.4) 
	in \cite{Qi:JIA10}), we obtain 
	\bean 
	\frac{\Gamma\left(\frac{n}2\right)}{\Gamma\left(\frac{n-1}2\right)} & \ge & \frac{e^{2\ln(2)}}{2} \ \frac{(n-3)^{3/2}}{(n-2)^{1/2}}. 
	\eean 
	Thus, any choice of $z_0$ satisfying
	\bea
	\label{left} 
	z_0 & \ge &  \frac{2\ \sqrt{\pi}}{e^{2\ln(2)}} \ \frac{(n-2)^{1/2}}{(n-3)^{3/2}}\ \frac1{\frac12 \ \epsilon^2} \ 
	\frac{n}{p} \ \log(p)
	\eea
	is an upper bound to the quantile for $(1-\epsilon) p G(z_0)$-order statistics at level $p^{-n}$. 
	We now want to enforce the constraint that 
	\bean
	(1-\epsilon) p G(z_0) & \le & \kappa s. 
	\eean 
	By again a straightforward application of Stirling's formula, we obtain 
	\bean
	G(z) & \le & \frac1{\sqrt{\pi}}\ \frac{ e^2}{2} \ \frac{(n-3)^{3/2}}{(n-2)^{1/2}}\ z
	\eean 
	for $n\ge 4$. Thus, we need to impose that 
	\bea
	\label{right}
	z_0 & \le & \frac{2 \sqrt{\pi}}{e^2} \ \frac{(n-2)^{1/2}}{(n-3)^{3/2}} \ \frac{\kappa s}{(1-\epsilon) p}. 
	\eea 
	Notice that the constraints (\ref{left}) and (\ref{right}) are compatible if 
	\bean 
	\kappa  & \ge & \frac{4}{e^{2(\ln(2)-1)}} \ \frac{1-\epsilon}{\epsilon^2} \ 
	\frac{n}{s} \ \log(p) .
	\eean 
	Take $\epsilon=1-\frac1{n/s \log(p)}$ and obtain
	\bean
	\label{quantunifsph0} 
	\bP \left( \|X^t_{\tilde{I}}v\|_\infty \ge \frac{8\ \sqrt{\pi}}{e^{2\ln(2)}} \ \frac{(n-2)^{1/2}}{(n-3)^{3/2}} \ \frac{n}{p} \ \log(p) \right) & \le & p^{-n}
	\eean
	for 
	\bean 
	\kappa & = & \frac{4}{e^{2(\ln(2)-1)}}  
	\eean 
	for any $p$ such that $n/s \log(p)\ge \sqrt{2}$, which is clearly the case as soon as $p\ge e^{\frac6{\sqrt{2\pi}}}$
	for $s\le n$ as assumed in the proposition. 
	
	If $n\ge 6$, we can simplify (\ref{quantunifsph0}) with 
	\bea
	\label{quantunifsph} 
	\bP \left( \|X^t_{\tilde{I}}v\|_\infty \ge 80 \ \frac{\log(p)}{p} \right) & \le & p^{-n}
	\eea

	\subsection{Extracting a well conditionned submatrix of $X_{\tilde{I}}$}
	The method for extracting $X_I$ from $X_{\tilde{I}}$ uses random column selection. For this 
	purpose, we will need to control the coherence and the norm of $X_{\tilde{I}}$.
	
	{\bf Step 1: The coherence of $X_{\tilde{I}}$}. Let us define the spherical cap 
	\bean 
	\cal C(v,h) & = & \left\{w \in \R^n \mid \la v,w\ra \ge h \right\}.
	\eean 
	The area of $\cal C(v,h)$ is given by 
	\bean 
	Area\left(\cal C(v,h)\right) & = &  Area (\cal S(0,1))  \int_0^{2h-h^2} t^{\frac{n-1}2}(1-t)^{\frac12} dt.
	\eean
	Thus, the probability that a random vector $w$ with Haar measure on the unit sphere $\cal S(0,1)$ falls into 
	the spherical cap $\cal C(v,h)$ is given by 
	\bean 
	\bP \left(w\in \cal C(v,h) \right) & = & \frac{\cal C(v,h)}{\cal S(0,1)} \\
	& = & \frac{\int_0^{2h-h^2} t^{\frac{n-1}2}(1-t)^{\frac12} dt}{\int_0^{1} t^{\frac{n-1}2}(1-t)^{\frac12} dt}. 
	\eean 
	The last term is the CDF of the Beta distribution. Using the fact that 
	\bean 
	\bP \left(X_j \in \cal C(X_{j^\prime},h) \right) & = & \bP \left(X_{j^\prime} \in \cal C(X_j,h) \right)
	\eean 
	the union bound, and the independence of the $X_j$'s, the probability that $X_j \in \cal C(X_{j^\prime},h)$ for some 
	$(j,j^\prime)$ in $\left\{1,\ldots,p\right\}^2$ can be bounded as follows
	\bean 
	\bP \left(\cup_{j\neq j^\prime=1}^{p} \left\{ X_j \in \cal C(X_{j^\prime},h) \right\} \right) & = & \bP \left(\cup_{j<j^\prime=1}^{p} \left\{ X_j \in \cal C(X_{j^\prime},h) \right\} \right) \\
	& \le & \sum_{j<j^\prime=1}^{p} \bP \left(\left\{ X_j \in \cal C(X_{j^\prime},h) \right\} \right) \\
	& = & \sum_{j<j^\prime=1}^{p} \bE \left[ \bP\left(\left\{ X_j \in \cal C(X_{j^\prime},h) \right\} \mid X_{j^\prime}\right)\right] \\
	& = & \frac{p(p-1)}2 \int_0^{2h-h^2} t^{\frac{n-1}2}(1-t)^{\frac12} dt.
	\eean  
	Our next task is to choose $h$ so that 
	\bean 
	\frac{p(p-1)}2 \int_0^{2h-h^2} t^{\frac{n-1}2}(1-t)^{\frac12} dt & \le & p^{-n}.
	\eean  
	Let us make the following crude approximation 
	\bean 
	\frac{p(p-1)}2 \int_0^{2h-h^2} t^{\frac{n-1}2}(1-t)^{\frac12} dt & \le & 
	\frac{p^2}2  (2h)^{\frac{n-1}2} (2h-0).
	\eean  
	Thus, taking 
	\bean 
	h & \ge & \frac12 \ \exp\left( - 2\ \left(\log(p)+\frac{\log\left(p)-\log(2)\right)}{n+1}\right)\right) 
	\eean
	will work. Moreover, since $p\ge2$, we deduce that 
	\bea
	\label{mutilde} 
	\mu (X_{\tilde{I}}) & \le & \frac12 \ p^{-2}
	\eea 
	with probability at least $1-p^{-n}$.
	
	{\bf Step 2: The norm of $X_{\tilde{I}}$}.
	The norm of any submatrix $X_{S}$ with $n$ rows and $\kappa s$ columns of $X$ has the following 
	variational representation
	\bean 
	\|X_S\| & = & \max_{\stackrel{v \in \mathbb R^{n},\ \|v\|=1}{w \in \mathbb R^{\kappa s},\ \|w\|=1}}\ v^t X_S w.
	\eean
	We will use an easy $\epsilon$-net argument to control this norm. For any $v\in \mathbb R^n$, 
	$v^t X_j$, $j\in S$ is a sub-Gaussian random variable satisfying
	\bean 
	\bP \left( |v^t X_j| \ge u \right) & \le & 2 \exp\left(-c n \ u^2 \right),
	\eean
	for some constant $c$. Therefore, using the fact that $\|w\|=1$, we have that 
	\bean 
	\bP \left( \left|\sum_{j\in S} v^t X_Sw \right| \ge u \right) & \le & 2 \exp\left(-c n \ u^2 \right).
	\eean
	Let us recall two useful results of Rudelson and Vershynin. The first one gives a bound on the 
	covering number of spheres. 
	\begin{prop} {\rm (\cite[Proposition 2.1]{RudelsonVershynin:CPAM09})}.
		\label{net}
		For any positive integer $d$, there exists an $\epsilon$-net of the unit sphere of $\R^d$ of cardinality 
		\bean 
		2d \left(1+\frac2{\epsilon}\right)^{d-1}.
		\eean
	\end{prop}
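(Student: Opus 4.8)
The plan is to prove this by the classical volumetric packing argument, but in the sharp form that exploits the fact that the sphere $\mathbb S^{d-1}=\{x\in\R^{d}:\|x\|=1\}$ is $(d-1)$-dimensional; this is precisely what produces the exponent $d-1$ together with the linear factor $2d$, rather than the cruder bound $(1+2/\epsilon)^{d}$ coming from a naive volume count in a ball. We may assume $0<\epsilon<2$, since for $\epsilon\ge 2$ a single point already forms an $\epsilon$-net of $\mathbb S^{d-1}$ (its diameter being $2$) and the claimed bound is then trivial.

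First I would let $\mathcal N\subset\mathbb S^{d-1}$ be a maximal subset with the property that $\|x-y\|\ge\epsilon$ for all distinct $x,y\in\mathcal N$; such a set exists by compactness (or Zorn's lemma). Maximality immediately gives that $\mathcal N$ is an $\epsilon$-net: for any $z\in\mathbb S^{d-1}$ there is some $x\in\mathcal N$ with $\|z-x\|<\epsilon$, for otherwise $\mathcal N\cup\{z\}$ would still be $\epsilon$-separated, contradicting maximality. It therefore only remains to bound $|\mathcal N|$.

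The key point is that the open Euclidean balls $B(x,\epsilon/2)$, $x\in\mathcal N$, are pairwise disjoint — if $z$ belonged to two of them the triangle inequality would force the corresponding centres to be at distance $<\epsilon$ — and that each such ball is contained in the spherical shell $\{y\in\R^{d}:\,1-\epsilon/2\le\|y\|\le 1+\epsilon/2\}$ because its centre lies on $\mathbb S^{d-1}$. Writing $\omega_d$ for the volume of the unit ball of $\R^{d}$ and comparing volumes, we get
\[
|\mathcal N|\,\Big(\tfrac{\epsilon}{2}\Big)^{d}\omega_d\ \le\ \Big(\big(1+\tfrac{\epsilon}{2}\big)^{d}-\big(1-\tfrac{\epsilon}{2}\big)^{d}\Big)\omega_d .
\]
I would then estimate the right-hand side by the mean value theorem applied to $t\mapsto t^{d}$ on $[\,1-\epsilon/2,\,1+\epsilon/2\,]$ (equivalently, by keeping only the odd terms in the binomial expansion), which yields $(1+\epsilon/2)^{d}-(1-\epsilon/2)^{d}\le 2\cdot\tfrac{\epsilon}{2}\cdot d\cdot(1+\epsilon/2)^{d-1}$. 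Cancelling $\omega_d$ and dividing through gives
\[
|\mathcal N|\ \le\ \frac{2\,(\epsilon/2)\,d\,(1+\epsilon/2)^{d-1}}{(\epsilon/2)^{d}}\ =\ 2d\Big(\frac{1+\epsilon/2}{\epsilon/2}\Big)^{d-1}\ =\ 2d\Big(1+\frac{2}{\epsilon}\Big)^{d-1},
\]
which is exactly the claimed bound.

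There is essentially no serious obstacle here: the argument is entirely elementary. The only steps requiring (minor) care are the observation that a maximal $\epsilon$-separated set is genuinely an $\epsilon$-net, the containment of the small balls in the thin shell rather than merely in the enlarged ball $B(0,1+\epsilon/2)$ (this is what sharpens $(1+2/\epsilon)^{d}$ to $2d(1+2/\epsilon)^{d-1}$), and the elementary inequality $(1+t)^{d}-(1-t)^{d}\le 2td(1+t)^{d-1}$; none of these is more than a one-line verification.
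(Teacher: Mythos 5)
Your proof is correct: the maximal $\epsilon$-separated set argument, the containment of the disjoint balls $B(x,\epsilon/2)$ in the shell $1-\epsilon/2\le\|y\|\le 1+\epsilon/2$, and the elementary bound $(1+t)^{d}-(1-t)^{d}\le 2td(1+t)^{d-1}$ all check out and yield exactly $2d(1+2/\epsilon)^{d-1}$. The paper itself gives no proof --- it simply cites Proposition 2.1 of Rudelson--Vershynin --- and your volumetric shell argument is essentially the one used there, so nothing further is needed.
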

	The second controls the approximation of the norm based on an $\epsilon$-net. 
	\begin{prop} {\rm (\cite[Proposition 2.2]{RudelsonVershynin:CPAM09})}.
		\label{rv2}
		Let $\mathcal N$ be an $\epsilon$-net of the unit sphere of $\R^d$ and let 
		$\mathcal N^\prime$ be an $\epsilon^\prime$-net of the unit sphere of $\R^{d^\prime}$. Then 
		for any linear operator $A: \R^d \mapsto \R^{d^\prime}$, we have 
		\bean 
		\|A\| & \le & \frac{1}{(1-\epsilon)(1-\epsilon^\prime)} \sup_{\stackrel{v \in \mathcal N}{w \in \mathcal N^\prime}}
		|v^t Aw|. 
		\eean   
	\end{prop}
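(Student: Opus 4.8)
The plan is to run the classical two-step $\epsilon$-net argument, discretizing first the unit sphere of the domain and then, with a domain net point held fixed, the unit sphere of the codomain. I will use throughout the identity $\|A\|=\sup_{v\in S^{d-1}}\|Av\|$, where $S^{d-1}$ is the unit sphere of $\R^d$, together with the (here automatic) fact that $\|A\|<\infty$, which is what makes it legitimate to solve the self-improving inequalities below for $\|A\|$.

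First I would fix an arbitrary $v\in S^{d-1}$ and choose $v_0\in\mathcal N$ with $\|v-v_0\|\le\epsilon$. Writing $v=v_0+(v-v_0)$ and combining the triangle inequality with submultiplicativity of the operator norm gives $\|Av\|\le\|Av_0\|+\|A\|\,\|v-v_0\|\le\sup_{v_0\in\mathcal N}\|Av_0\|+\epsilon\,\|A\|$. Taking the supremum over $v\in S^{d-1}$ on the left-hand side and rearranging produces
\[
\|A\|\ \le\ \frac{1}{1-\epsilon}\ \sup_{v_0\in\mathcal N}\|Av_0\|.
\]

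Next, for each fixed $v_0\in\mathcal N$ I would apply the same reduction on the codomain. Since $\|Av_0\|=\sup_{w\in S^{d'-1}}|w^tAv_0|$, pick for an arbitrary $w\in S^{d'-1}$ a point $w_0\in\mathcal N'$ with $\|w-w_0\|\le\epsilon'$ and estimate, by Cauchy--Schwarz, $|w^tAv_0|\le|w_0^tAv_0|+|(w-w_0)^tAv_0|\le\sup_{w_0\in\mathcal N'}|w_0^tAv_0|+\epsilon'\,\|Av_0\|$. Taking the supremum over $w$ and rearranging gives $\|Av_0\|\le(1-\epsilon')^{-1}\sup_{w_0\in\mathcal N'}|w_0^tAv_0|$; substituting this into the previous display yields
\[
\|A\|\ \le\ \frac{1}{(1-\epsilon)(1-\epsilon')}\ \sup_{v_0\in\mathcal N,\ w_0\in\mathcal N'}|w_0^tAv_0|,
\]
which is the claimed inequality, the bilinear quantity $|w_0^tAv_0|$ being the same scalar as $|v_0^tA^tw_0|$ up to a harmless transpose.

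As for difficulty, there is essentially nothing hard here: the argument is routine. The only points that warrant a moment of care are that one must know $\|A\|<\infty$ before absorbing the term $\epsilon\|A\|$, and that the two $\epsilon$-net reductions have to be carried out in the correct nested order, the codomain net being introduced only after a domain net point has been frozen.
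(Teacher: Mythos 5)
Your argument is correct and is exactly the standard two-step net argument; the paper itself offers no proof of this proposition (it is quoted from Rudelson--Vershynin, whose Proposition 2.2 is proved in precisely this way). The only cosmetic point is that the paper's statement has the roles of $v$ and $w$ transposed relative to the domains of $A$, which you rightly dismiss as harmless.
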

	Let $\mathcal N$ (resp. $\mathcal N^\prime$) be an $\epsilon$-net of the unit sphere of $\R^{\kappa s}$ 
	(resp. of $\R^n$). On the other hand, we have that 
	\bean 
	\bP \left( \sup_{\stackrel{v \in \mathcal N}{w \in \mathcal N^\prime}}
	|v^t Aw|\ge u \right) & \le & 2|\mathcal N||\mathcal N^\prime| \exp\left(-c n \ u^2 \right), \\
	& \le & 8 \ n \kappa s \left(1+\frac2{\epsilon}\right)^{n+\kappa s-2} \exp\left(-c n \ u^2 \right),
	\eean 
	which gives 
	\bean 
	\bP \left( \sup_{\stackrel{v \in \mathcal N}{w \in \mathcal N^\prime}}
	|v^t Aw|\ge u \right) & \le & 8 \ \frac{n \kappa s \ \epsilon^2}{(2+\epsilon)^2} 
	\exp\left(-\left(c n \ u^2 -(n+\kappa s) \log\left(1+\frac2{\epsilon}\right)\right)  \right).
	\eean 
	Using Proposition (\ref{rv2}), we obtain that 
	\bean
	\bP \left( \|X_S\| \ge u \right) & \le & \bP \left( \frac{1}{(1-\epsilon)^2} 
	\sup_{\stackrel{v \in \mathcal N}{w \in \mathcal N^\prime}} |v^t Aw| \ge u  \right).
	\eean
	Thus, we obtain 
	\bean
	\bP \left( \|X_S\| \ge u \right) & \le & 8 \ \frac{n \kappa s \ \epsilon^2}{(2+\epsilon)^2} 
	\exp\left(-\left(c n \ (1-\epsilon)^4 \ u^2-(n+\kappa s) \log\left(1+\frac2{\epsilon}\right)\right)  \right).
	\eean
	To conclude, let us note that 
	\bean 
	\bP\left(\|X_{\tilde{I}}\|\ge u \right) & \le & \bP \left( \max_{\stackrel{S \subset \{1,\ldots,p\}}{|S|=\kappa s}} \|X_S\| \ge u \right) \\
	& \le & {p\choose {\kappa s}}\ 8 \ \frac{n \kappa s \ \epsilon^2}{(2+\epsilon)^2} 
	\exp\left(-\left(c n \ (1-\epsilon)^4 \ u^2-(n+\kappa s) \log\left(1+\frac2{\epsilon}\right)\right)\right).
	\eean 
	and using the fact that 
	\bean 
	{p\choose {\kappa s}} & \le & \left(\frac{e \ p}{\kappa s}\right)^{\kappa s},
	\eean
	one finally obtains
	\bean 
	\bP\left(\|X_{\tilde{I}}\|\ge u \right) & \le & 8 \  
	\exp\left(-\left(c n \ (1-\epsilon)^4 \ u^2-(n+\kappa s) \log\left(1+\frac2{\epsilon}\right)- 
	\kappa s \log\left(\frac{e \ p}{\kappa s}\right) -\log\left(\frac{n \kappa s \ \epsilon^2}{(2+\epsilon)^2}\right)\right)\right).
	\eean 
	The right hand side term will be less than $8 p^{-n}$ when 
	\bean 
	n \log(p) & \le & c n \ (1-\epsilon)^4 \ u^2-(n+\kappa s) \log\left(1+\frac2{\epsilon}\right)- 
	\kappa s \log\left(\frac{e \ p}{\kappa s}\right) -\log\left(\frac{n \kappa s \ \epsilon^2}{(2+\epsilon)^2}\right).
	\eean
	This happens if 
	\bean 
	u^2 & \ge & \frac1{c (1-\epsilon)^4} \left(n \frac{\log(p)}{n}+\left(1+\frac{\kappa s}{n}\right) 
	\log\left(1+\frac2{\epsilon}\right)+ \frac{\kappa s}{n} 
	\log\left(\frac{e \ p}{\kappa s}\right)+\frac1{n}\log\left(\frac{n \kappa s \ \epsilon^2}{(2+\epsilon)^2}\right)\right).
	\eean
	Notice that 
	\bea
	\label{sn} 
	& &\left(1+\frac{\kappa s}{n}\right) 
	\log\left(1+\frac2{\epsilon}\right)+\frac{\kappa s}{n} 
	\log\left(\frac{e}{\kappa s}\right)
	+\frac1{n}\log\left(\frac{n \kappa s \ \epsilon^2}{(2+\epsilon)^2}\right) \\
	& & \hspace{3cm} \le 
	\left(1+C_\kappa \right) 
	\log\left(1+\frac2{\epsilon}\right)+C_\kappa+\frac1{n}\log\left(\frac{C_\kappa n^2}{4}\right), \nonumber \\
	& & \hspace{3cm} \le K_{\epsilon} \ \frac{6}{\sqrt{2\pi}}, \nonumber
	\eea 
	since $n\ge 1$. Now, since
	\bean 
	\frac{6}{\sqrt{2\pi}} & \le \log(p) \le & \frac{n+\kappa s}{n} \log(p),
	\eean 
	we finally obtain 
	\bea
	\label{normbnd}
	\bP\left(\|X_{\tilde{I}}\|\ge \frac{1+K_\epsilon} 
	{c (1-\epsilon)^4}\frac{n+\kappa s}{n} \log(p) \right) & \le &  \frac8{p^{n}}.
	\eea

	{\bf Step 3}. We will use the following lemma on the distance to identity of randomly 
	selected submatrices. 
	\begin{lemm} 
		\label{submat}
		Let $r\in(0,1)$. Let $n$, $\kappa$ and $s$ satisfy conditions (\ref{n}) and (\ref{kappa}) assumed in 
		Proposition \ref{gamtheo}.
		Let $\Sigma\subset \left\{1,\ldots,\kappa s\right\}$ 
		be a random support with uniform distribution on index sets with cardinal $s$.
		Then, with probability greater than or equal to $1-9 \ p^{-n}$ on $X$, the following bound holds:
		\bea \label{sing}
		\bP \left(\|X^t_{\Sigma}X_{\Sigma}-\Id_{s} \|\ge r \mid X\right) & < & 1.
		\eea
	\end{lemm}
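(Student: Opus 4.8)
The plan is to condition on the two high-probability events produced just above --- the coherence bound (\ref{mutilde}) and the operator-norm bound (\ref{normbnd}) --- and then to run a random column-selection argument on the now-fixed matrix $X_{\tilde I}$. Let $\mathcal E$ be the event on which simultaneously $\mu(X_{\tilde I})\le\frac{1}{2}p^{-2}$ and $\|X_{\tilde I}\|\le\frac{1+K_\epsilon}{c(1-\epsilon)^4}\,\frac{n+\kappa s}{n}\log(p)$. By (\ref{mutilde}), (\ref{normbnd}) and a union bound, $\bP(\mathcal E)\ge 1-9p^{-n}$, which is exactly the probability asserted in the lemma; from here on I fix a realization of $X$ in $\mathcal E$ and work with the conditional law of $\Sigma$. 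On $\mathcal E$, condition (\ref{n}) (which forces $\kappa s\le C_\kappa n$, hence $\frac{n+\kappa s}{n}\le 1+C_\kappa$, and $n\le(p/\log(p))^2$) lets me rewrite the norm bound as $\|X_{\tilde I}\|^2\le\big(\frac{(1+K_\epsilon)(1+C_\kappa)}{c(1-\epsilon)^4}\big)^2\log^2(p)$, and also gives $\log(\kappa s)\le\log(C_\kappa n)$.

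Next I would reduce the statement to a random principal submatrix estimate. Since the columns of $X$ lie on the unit sphere, the Gram matrix $G:=X_{\tilde I}^t X_{\tilde I}$ has unit diagonal, so $H:=G-\Id_{\kappa s}$ is symmetric, has zero diagonal, has entries of modulus at most $\mu(X_{\tilde I})$, and satisfies $\|H\|\le\|G\|+1=\|X_{\tilde I}\|^2+1$. For every $\Sigma\subset\{1,\dots,\kappa s\}$ with $|\Sigma|=s$, the matrix $X_\Sigma^t X_\Sigma-\Id_s$ is exactly the principal submatrix of $H$ supported on $\Sigma$, so the lemma is an instance of a paving / restricted-invertibility bound of Bourgain--Tzafriri--Tropp type. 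The key step is to invoke such a bound with sampling ratio $\delta=s/(\kappa s)=1/\kappa$: either a first-moment (random paving) estimate on $\bE_\Sigma\|X_\Sigma^t X_\Sigma-\Id_s\|$ in the spirit of \cite{Tropp:StudiaMath08,Tropp:CRAS08,RudelsonVershynin:CPAM09}, or a matrix Chernoff/Bernstein tail bound for sampling without replacement applied directly to $\bP(\|X_\Sigma^t X_\Sigma-\Id_s\|\ge r\mid X)$. Either route produces a bound governed by $\frac{\|X_{\tilde I}\|^2}{\kappa}\log(\kappa s)$ plus a coherence contribution of order $\mu(X_{\tilde I})\,s$ times logarithmic factors. (The passage between the fixed-cardinality selection model of the lemma and the independent-Bernoulli model, should the paving estimate be stated for the latter, costs only a constant and is harmless here, since only an expectation- or tail-level bound is needed.)

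It then remains to check that, on $\mathcal E$ and under conditions (\ref{n}) and (\ref{kappa}), this bound is strictly smaller than $r$. The coherence contribution is negligible: $\mu(X_{\tilde I})$ is of order $p^{-2}$ while $s\le\kappa s\le C_\kappa n\le C_\kappa(p/\log(p))^2$ by (\ref{n}), so $\mu(X_{\tilde I})\,s=O(1/\log^2(p))$, with the attached logarithms dominated by $\log(C_\kappa n)$. For the dominant term, condition (\ref{kappa}) was reverse-engineered precisely so that
\[
\kappa\;\ge\;\frac{4e^3}{(1-\rho_-)^2}\Big(\frac{(1+K_\epsilon)(1+C_\kappa)}{c(1-\epsilon)^4}\Big)^2\log^2(p)\,\log(C_\kappa n)\;\ge\;\frac{4e^3}{(1-\rho_-)^2}\,\|X_{\tilde I}\|^2\,\log(C_\kappa n),
\]
so that $\frac{\|X_{\tilde I}\|^2}{\kappa}\log(\kappa s)\le\frac{(1-\rho_-)^2}{4e^3}$, and hence $\bE_\Sigma\|X_\Sigma^t X_\Sigma-\Id_s\|$ (or the corresponding tail quantity) is bounded by something of order $(1-\rho_-)^2$. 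In the regime of $r$ in which the lemma is used --- $r$ comparable to $1-\rho_-^2$, the threshold which later converts the conclusion into $\sigma_{\min}(X_I)\ge\rho_-$ via $\sigma_{\min}(X_\Sigma)^2\ge 1-\|X_\Sigma^t X_\Sigma-\Id_s\|$ --- this is strictly below $r$, the remaining slack (the constant $4e^3$, the extra factor $\log(C_\kappa n)$, and the $4e^{-2(\ln(2)-1)}$ floor in (\ref{kappa})) absorbing the absolute constants coming from the paving estimate and the coherence term.

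Finally, an expectation (or tail value) strictly below $r$ forces $\bP(\|X_\Sigma^t X_\Sigma-\Id_s\|\ge r\mid X)<1$ --- a random variable that is $\ge r$ almost surely has mean $\ge r$ --- which is exactly the assertion of the lemma on the event $\mathcal E$, of probability at least $1-9p^{-n}$. The probabilistic content is therefore soft, a first-moment/existence argument; the real work, and the main obstacle, is to pin down and apply the random-submatrix norm bound with \emph{explicit} constants and logarithmic factors, and then to verify by careful bookkeeping that conditions (\ref{n}) and (\ref{kappa}) --- which are calibrated for precisely this step --- do push the bound below $r$. A secondary technical nuisance is the reconciliation of the uniform fixed-size selection model of the lemma statement with whichever sampling model (Bernoulli, or sampling without replacement via a matrix Chernoff inequality) is most convenient for the paving estimate.
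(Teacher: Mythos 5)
Your proposal follows the same architecture as the paper's appendix proof: condition on the coherence event (\ref{mutilde}) and the norm event (\ref{normbnd}), note by a union bound that these hold jointly with probability $1-9p^{-n}$ (the paper's $1-8p^{-n}-p^{-n}$), reduce the lemma to a random principal submatrix bound for $H=X_{\tilde I}^tX_{\tilde I}-\Id$ at sampling ratio $1/\kappa$, and observe that (\ref{kappa}) is calibrated exactly so that the resulting bound drops below the threshold. Where you diverge is in the key estimate invoked: the paper commits to a specific chain --- a Poissonization step (costing the factor $2$ you correctly anticipate as the fixed-cardinality-to-Bernoulli reconciliation), the tail decoupling inequality $\bP(\|RHR\|\ge r)\le 36\,\bP(\|RHR'\|\ge r/2)$, and then the explicit three-term tail bound $\mathcal V(s,[r',u,v])$ of Chr\'etien--Darses, whose parameters $u,v,C_{\mathcal V}$ are then tuned against (\ref{n}) and (\ref{kappa}) to force the probability below $1$ --- whereas you leave open a choice between a Tropp-style first-moment paving estimate and a matrix Chernoff tail bound, concluding by the soft observation that an expectation below $r$ forces the tail probability below $1$. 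Your first-moment route is a legitimate and arguably more classical alternative, and your identification of the dominant term $\|X_{\tilde I}\|^2\log(\kappa s)/\kappa$ and of the negligibility of the coherence contribution matches the paper's bookkeeping in spirit. The caveat is that you explicitly defer the part that constitutes essentially all of the paper's appendix: pinning down the estimate with explicit constants and verifying that (\ref{n}) and (\ref{kappa}) (including the auxiliary conditions $C_\kappa n\ge 2\times 36\times 3\times 3$, $n\ge\exp(r/2)/C_\kappa$, and the lower bound on $p$) really do close the argument; also, the paper takes $r=1-\rho_-$ rather than $1-\rho_-^2$, a minor slip in your calibration discussion. As a plan the proposal is sound and faithful to the paper's strategy, but it is a plan rather than a proof.
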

	\begin{proof}
		See Appendix.
	\end{proof}

	Taking $r=1-\rho_-$, we conclude from Lemma \ref{submat} that, for any $s$ satisfying (\ref{s}), 
	there exists a subset $\tilde{\tilde{I}}$ of $\tilde{I}$ with cardinal $s$ such that 
	\bean 
	\sigma_{\min}\left(X_{\tilde{\tilde{I}}}\right)\ge \rho_-.
	\eean
	
	\subsubsection{The supremum over an $\epsilon$-net}
	Recalling Proposition \ref{net}, there exists an $\epsilon$-net $\cal N$ covering the unit sphere in $\R^n$ with cardinal 
	\bean 
	|\mathcal N | & \le & 2n\left( 1+\frac2{\epsilon}\right)^{n-1}.
	\eean
	Combining this with (\ref{quantunifsph}), we have that 
	\bea
	\nonumber
	& \bP \left( 
	\sup_{v\in \cal N} \inf_{I\subset \cal S_{s,\rho_-}} \left\|X_I^tv \right\| \ge 
	\frac{8\ \sqrt{\pi}}{e^{2\ln(2)}} \ \frac{n\ (n-2)^{1/2}}{(n-3)^{3/2}} \ \frac{\log(p)}{p} \right) \\ 
	\label{netcontrol}
	& \le 2n\left( 1+\frac2{\epsilon}\right)^{n-1} \ p^{-n}+9\ p^{-n}.
	\eea
	
	\subsection{From the $\epsilon$-net to the whole sphere}
	
	For any $v^\prime$, one can find $v\in \cal N$ with $\|v^\prime-v\|_2\le \epsilon$. Thus, we have 
	\bea
	\nonumber
	\|X_I^tv^\prime \|_{\infty} & \le & \|X_I^t v \|_{\infty}+ \|X_I^t (v^\prime-v) \|_{\infty} \\
	\nonumber 
	& \le & \|X_I^t v \|_{\infty}+ \max_{j\in I} |\la X_j,(v^\prime-v) \ra |\\
	\nonumber 
	& \le & \|X_I^t v \|_{\infty}+ \max_{j\in I} \| X_j\|_{2} \|v^\prime-v\|_2 \\
	\label{sph} & \le & \|X_I^t v \|_{\infty}+ \epsilon.
	\eea
	Taking 
	\bean 
	\epsilon & = & 80\ \frac{\log(p)}{p},
	\eean 
	we obtain from (\ref{sph}) and (\ref{netcontrol}) that 
	\bean 
	& \bP \left( 
	\sup_{\|v\|_2=1} \inf_{I\subset \cal S_{s,\rho_-}} \left\|X_I^tv \right\| \ge 
	80\ \frac{\log(p)}{p}\right) \\ & \le 20 \ n\left( 1+\frac{p}{80\log(p)}\right)^{n-1} \ p^{-n}+9\ p^{-n}
	\eean 
	and thus,  
	\bean 
	& \bP \left(\sup_{\|v\|_2=1} \inf_{I\subset \cal S_{s,\rho_-}} \left\|X_I^tv \right\| \ge 
	80\ \frac{\log(p)}{p}\right) \\ & \le 5 \ \frac{n}{p\ \log(p)^{n-1}}+9\ p^{-n},
	\eean
	for $p\ge \exp(6/\sqrt{2\pi})$.

	\appendix 
	
	\section{Proof of Lemma \ref{submat}}
	For any index set $S\subset \{1,\ldots,\kappa s\}$ with cardinal $s$, 
	define $R_S$ as the diagonal matrix with 
	\bean 
	(R_{S})_{i,i} & = & 
	\begin{cases}
		1 \textrm{ if } i\in S, \\
		0 \textrm{ otherwise.}
	\end{cases}
	\eean
	Notice that we have 
	\bean 
	\left\|X_S^tX_S-I\right\| & = & \left\|R_SHR_S\right\| 
	\eean
	with $H=X^tX-I$. In what follows, $R_\delta$ simply denotes a diagonal 
	matrix with i.i.d. diagonal components $\delta_j$, $j=1,\ldots,\kappa s$ 
	with Bernoulli $B(1,1/\kappa)$ distribution. Let $R^{\prime}$ be an independent copy of $R$.
	Assume that $S$ is drawn uniformly at random among index sets of $\{1,\ldots,\kappa s\}$
	with cardinal $s$. By an easy Poissonization argument, similar to \cite[Claim $(3.29)$ p.2173]{CandesPlan:AnnStat09},
	we have that 
	\beq \label{poisse}
	\bP \left(\|R_sHR_s\|\ge r \right) \ \le \ 2\ \bP \left(\|RHR\|\ge r\right),
	\eeq
	and by Proposition 4.1 in \cite{ChretienDarses:SPL12}, we have that  
	\bea \label{dec}
	\bP\left(\|RHR\|\ge r\right) & \le & 36\ \bP\left(\|RHR^\prime\|\ge r/2 \right).
	\eea
	In order to bound the right hand side term, we will use \cite[Proposition 4.2]{ChretienDarses:SPL12}. 
	Set $r^\prime=r/2$. Assuming that $\kappa \frac{{r^\prime}^2}{e} \ge u^2 \ge \frac{1}{\kappa}\|X\|^4$ and 
	$v^2\ge \frac{1}{\kappa}\|X\|^2$, the right hand side term can be bounded from above as follows: 
	\bea\label{inv_bound}
	\bP \left(\|RHR'\|\ge r^\prime \right) & \le & 3 \ \kappa s\ \mathcal  V(s,[r^\prime,u,v]),
	\eea
	with
	\bean
	\mathcal  V(s,[r^\prime ,u,v]) & = & \left(e\frac{1}{\kappa} \frac{u^2}{{r^\prime}^2} \right)^{\frac{{r^\prime}^2}{v^2}} 
	+\left(e \frac{1}{\kappa}\frac{\|M\|^4}{u^2} \right)^{u^2/\|M\|^2} 
	+\left(e \frac{1}{\kappa}\frac{\|M\|^2}{v^2} \right)^{v^2/\mu(M)^2}.
	\eean
	Using (\ref{mutilde}) and (\ref{normbnd}), we deduce that with probability at least 
	$1-8 p^{-n}-p^{-n}$, we have 
	\bean
	\mathcal  V(s,[r^\prime,u,v]) & = & \left(e\frac{1}{\kappa} \frac{u^2}{{r^\prime}^2} \right)^{\frac{{r^\prime}^2}{v^2}} 
	+\left(e \frac{1}{\kappa}\frac{\left(\frac{1+K_\epsilon}{c(1-\epsilon)^4}\frac{n+\kappa s}{n} \log(p)\right)^4}{u^2} \right)
	^{\frac{u^2}{\left(\frac{1+K_\epsilon}{c(1-\epsilon)^4}\frac{n+\kappa s}{n} \log(p)\right)^2}}  \\
	& & \hspace{.5cm} +\left(e \frac{1}{\kappa}
	\frac{\left(\frac{1+K_\epsilon}{c(1-\epsilon)^4}\frac{n+\kappa s}{n} \log(p)\right)^2}{v^2} \right)^{\frac{v^2}{\frac12 \ p^{-2}}}.
	\eean
	Take $\kappa$, $u$ and $v$ such that 
	\bean 
	v^2 & = &  {r^\prime}^2 \ \frac1{\log(C_\kappa\ n)}  \\
	\\
	u^2 & = &  C_{\cal V}\ \left( \frac{1+K_\epsilon}{c(1-\epsilon)^4}\frac{n+\kappa s}{n} \log(p)\right)^2,\\
	\\
	\kappa & \ge & e^3 \ \frac{C_{\cal V}}{{r^\prime}^2} \ \left( \frac{1+K_\epsilon}{c(1-\epsilon)^4}\frac{n+\kappa s}{n} \log(p)\right)^2 \\
	\eean
	for some $C_{\cal V}$ possibly depending on $s$. Since $\kappa s\le C_{\kappa} n$,
	this implies in particular that 
	\bea 
	\label{kap}
	\kappa & \ge & e^3 \ \frac{C_{\cal V}}{{r^\prime}^2} \ \left( \frac{(1+K_\epsilon)(1+C_{\kappa})}{c(1-\epsilon)^4} \log(p)\right)^2.
	\eea
	Thus, we obtain that 
	\bean
	\mathcal  V(s,[r^\prime,u,v]) & = & \left(\frac{1}{e^2} \right)^{\log(C_\kappa n)} 
	+\left(\frac{{r^\prime}^2}{e^2 \ C_{\cal V}^2} \right)^{C_{\cal V}}
	+\left(\frac{\log(C_\kappa n)}{e^2\ C_{\cal V}} \right)^{\frac{2 {r^\prime}^2\ p^2}{\log(C_\kappa n)}}.
	\eean
	Using (\ref{poisse}), (\ref{dec}) and (\ref{inv_bound}), we obtain that 
	\bean 
	\bP \left(\|R_sHR_s\|\ge r^\prime \right) & \le & 2\times 36 \times 3 \times \kappa s 
	\left(\left(\frac{1}{e^2} \right)^{\log(C_\kappa n)} 
	+\left(\frac{{r^\prime}^2}{e^2\ C_{\cal V}^2} \right)^{C_{\cal V}}
	+\left(\frac{\log(C_\kappa n)}{e^2\ C_{\cal V}} \right)^{\frac{2 {r^\prime}^2\ p^2}{\log(C_\kappa n)}}\right).
	\eean 
	Take 
	\bea
	\label{CV}
	C_{\cal V}  & = & \log(C_\kappa n)
	\eea
	and, since $p>1$ and $r \in (0,1)$, we obtain 
	\bea
	& & \bP \left(\|R_sHR_s\|\ge r^\prime \right) \nonumber \\
	\label{probe}
	& & \hspace{2cm} \le 2\times 36 \times 3 \times \kappa s 
	\left(\left(\frac{1}{e^2} \right)^{\log(C_\kappa n)} 
	+\left(\frac{{r^\prime}^2}{e^2 \ \log^2(C_\kappa n)} \right)^{\log(C_\kappa n)}+\left(\frac{1}{e^2} \right)^{\frac{2 {r^\prime}^2\ p^2}{\log(C_\kappa n)}}\right).
	\eea 
	Replace $r^\prime$ by $r/2$. Since it is assumed that $n \ge \exp(r/2)/C_{\kappa}$ and $p\ge \sqrt{2} \log(C_{\kappa}n)/r$, 
	it is sufficient to impose that 
	\bean 
	C_\kappa^2 n^2  & \ge & \left(2\times 36 \times 3 \times \kappa s\times 3\right)^{\frac1{\log(e^2)}},
	\eean
	in order for the right hand side of (\ref{probe}) to be less than one. 
	Since $\kappa s\le C_{\kappa}n$, it is sufficient to impose that 
	\bean 
	C_\kappa^2 n^2  & \ge & 2\times 36 \times 3 \times \ C_\kappa n\times 3,
	\eean 
	or equivalently, 
	\bean 
	C_\kappa n  & \ge & 2\times 36 \times 3 \times 3.
	\eean 
	This is implied by (\ref{n}) in the assumptions. 
	On the other hand, combining (\ref{kap}) and (\ref{CV}) implies that one can take 
	\bean 
	\kappa & = & \frac{4e^3}{r^2} \ \left( \frac{(1+K_\epsilon)(1+C_{\kappa})}{c(1-\epsilon)^4}\right)^2 \log^2(p) \log(C_\kappa n),
	\eean 
	which is nothing but (\ref{kappa}) in the assumptions.

\end{document}